\newtheorem{thm}{Theorem}[section]
\author{Fabio Silva Botelho and Eduardo Pandini Barros \\ Departamento de Matemática \\ Universidade Federal de Santa Catarina, UFSC \\
Florian\'{o}polis, SC - Brazil}
\title{\bf  Existence of solution for  an optimal control problem associated to the Ginzburg-Landau system in superconductivity} 
\begin{document}
\maketitle

\abstract{This article develops a global existence result for the solution of an optimal control problem associated to the Ginzburg-Landau
system. This main result is based on standard tools of analysis and functional analysis, such as the Friedrichs Curl Inequality and the Rellich-Kondrashov Theorem. In the concerning model, we consider the presence of an external magnetic field and the control variable is a complex function acting on the super-conducting sample boundary. Finally the state variables are the
 Ginzburg-Landau order parameter and the magnetic potential, defined on domains properly specified.}

\section{Introduction} This work develops an existence result for an optimal control problem closely related to the Ginzburg-Landau system
in superconductivity.
First, we recall that about the year 1950 Ginzburg and Landau introduced a theory to model the super-conducting behavior of some types of materials below a critical temperature $T_c$,
which depends on the material in question. They postulated the free density energy may be written close to $T_c$ as
$$F_s(T)=F_n(T)+\frac{\hbar}{4m}\int_\Omega |\nabla \psi|^2_2 \;dx+\frac{\alpha(T)}{4}\int_\Omega |\psi|^4\;dx-\frac{\beta(T)}{2}\int_\Omega |\psi|^2\;dx,$$
where $\psi$ is a complex parameter, $F_n(T)$ and $F_s(T)$ are the normal and super-conducting free energy densities, respectively (see \cite{100}
 for details).
Here $\Omega \subset \mathbb{R}^3$ denotes the super-conducting sample with a boundary denoted by $\partial \Omega=\Gamma.$ The complex function $\psi \in W^{1,2}(\Omega; \mathbb{C})$ is intended to minimize
$F_s(T)$ for a fixed temperature $T$.

Denoting $\alpha(T)$ and $\beta(T)$ simply by $\alpha$ and $\beta$,  the corresponding Euler-Lagrange equations are given by:
\begin{equation} \left\{
\begin{array}{ll}
 -\frac{\hbar}{2m}\nabla^2 \psi+\alpha|\psi|^2\psi-\beta\psi=0, & \text{ in } \Omega
 \\ \\
 \frac{\partial {\psi}}{\partial \textbf{n}}=0, &\text{ on } \partial\Omega.\end{array} \right.\end{equation}
This last system of equations is well known as the Ginzburg-Landau (G-L) one.
In the physics literature is also well known the G-L energy in which a magnetic potential here denoted by $\textbf{A}$ is included.
The functional in question is given by:
\begin{eqnarray}\label{ar67}
J(\psi,\textbf{A})&=&\frac{1}{8\pi}\int_{\mathbb{R}^3} |\text{ curl }\textbf{A}-\textbf{B}_0|_2^2\;dx+\frac{\hbar^2}{4m}\int_\Omega \left|\nabla \psi-\frac{2 ie}{\hbar c}\textbf{A}\psi\right|^2_2\;dx \nonumber \\ &&+\frac{\alpha}{4}\int_\Omega|\psi|^4\;dx-\frac{\beta}{2}\int_\Omega |\psi|^2\;dx
\end{eqnarray}
Considering its minimization on the space $U$, where $$U= W^{1,2}(\Omega; \mathbb{C}) \times W^{1,2}(\mathbb{R}^3; \mathbb{R}^3),$$ through the physics notation the corresponding Euler-Lagrange equations are:
\begin{equation} \left\{
\begin{array}{ll}
 \frac{1}{2m}\left(-i\hbar\nabla -\frac{2e}{c}\mathbf{A}\right)^2 \psi+\alpha|\psi|^2\psi-\beta\psi=0, & \text{ in } \Omega
 \\ \\
 \left(i\hbar \nabla\psi+\frac{2e}{c}\textbf{A}\psi\right) \cdot \textbf{n}=0, &\text{ on } \partial\Omega,\end{array} \right.\end{equation}
and
\begin{equation} \left\{
\begin{array}{ll}
 \text{curl }(\text{curl } \textbf{A})= \text{ curl } \textbf{B}_0+\frac{4 \pi}{c} \tilde{J}, & \text{ in } \Omega
 \\ \\
 \text{curl }(\text{curl } \textbf{A})=\text{ curl }\textbf{B}_0, & \text{ in } \mathbb{R}^3\setminus \overline{\Omega},\end{array} \right.\end{equation}
 where $$\tilde{J}=-\frac{ie \hbar}{2m}\left(\psi^*\nabla \psi-\psi\nabla \psi^*\right)-\frac{2e^2}{mc}|\psi|^2 \textbf{A}.$$
 and $$\textbf{B}_0 \in L^2(\mathbb{R}^3; \mathbb{R}^3)$$ is a known applied magnetic field.

Existence of a global solution for a similar problem has been proved in \cite{500}.

\section{An existence result for a related optimal control problem}

Let $\Omega \subset \mathbb{R}^3$, $\Omega_1 \subset \mathbb{R}^3$ be open, bounded and connected sets  with Lipschitzian boundaries, where $\overline{\Omega} \subset \Omega_1$ and $\Omega_1$ is convex. Let $\phi_d:\Omega \rightarrow \mathbb{C}$   be a known function in $L^4(\Omega;\mathbb{C})$ and consider the problem of minimizing
$$\||\phi|^2-|\phi_d|^2\|^2_{0,2,\Omega}$$ with $(\phi,\mathbf{A},u)$ subject to the satisfaction of the Ginzburg-Landau equations, indicated in $(\ref{a.1})$
and $(\ref{a.2})$ in the next lines.

For such a problem, the control variable is $u \in L^2(\partial \Omega;\mathbb{C})$ and the state variables are the Ginzburg-Landau order parameter
$\phi \in W^{1,2}(\Omega,\mathbb{C})$  and the magnetic potential $\mathbf{A} \in W^{1,2}(\Omega_1,\mathbb{R}^3).$

Our main existence result is summarized by the following theorem.

\begin{thm} Consider the functional $$J(\phi,\mathbf{A},u)=\frac{\varepsilon}{2} \|\nabla \phi\|_{0,2,\Omega}^2
+K_1\||\phi|^2-|\phi_d|^2\|_{0,2,\Omega}^2+K_2\|u\|_{0,2,\partial \Omega}^2,$$
subject to
$(\phi,\mathbf{A},u) \in \mathcal{C},$ where
$$\mathcal{C}=\{(\phi,\mathbf{A},u) \in W^{1,2}(\Omega,\mathbb{C}) \times W^{1,2}(\Omega_1,\mathbb{R}^3)\times L^2(\partial \Omega;\mathbb{C})\;:\; \text{ such that }
(\ref{a.1}) \text{ and } (\ref{a.2}) \text{ hold }\},$$ where
\begin{equation}\label{a.1}\left\{
\begin{array}{ll}
 \frac{1}{2m}\left(-i\hbar \nabla -\frac{2e}{c}\mathbf{A} \right)^2\phi+\alpha |\phi|^2\phi -\beta \phi =0,& \text{ in } \Omega,
  \\ \\
  \left(i\hbar \nabla \phi+ \frac{2e}{c}\mathbf{A}\phi\right) \cdot \mathbf{n}=u,& \text{ on } \partial  \Omega,\end{array} \right.\end{equation}
and
\begin{equation}\label{a.2}\left\{
\begin{array}{ll}
 \text{ curl } \text{ curl } \mathbf{A} =\text{ curl }\mathbf{B}_0+\frac{4\pi}{c} \tilde{J},& \text{ in } \Omega,
  \\ \\
  \text{ curl } \text{ curl } \mathbf{A}=\text{ curl } \mathbf{B}_0,& \text{ in } \Omega_1\setminus \Omega,
  \\ \\ \text{ div }\mathbf{A}=0,& \text{ in } \Omega_1,
  \\ \\
  \mathbf{A} \cdot \mathbf{n}=0,& \text{ on } \partial \Omega_1
  \end{array} \right.\end{equation}
 where,
 $$\tilde{J}=-\frac{ie\hbar}{2m}(\phi^*\nabla \phi-\phi\nabla \phi^*)-\frac{2e^2}{mc}|\phi|^2 \mathbf{A},$$
 and where $\varepsilon>0$ is a small parameter, $K_1>0$ and $K_2>0$.

 Under such hypotheses, there exists $(\phi_0, \mathbf{A}_0,u_0) \in \mathcal{C}$ such that $$J(\phi_0,\mathbf{A}_0, u_0)=\min_{(\phi,\mathbf{A},u) \in \mathcal{C}} J(\phi,\mathbf{A},u).$$
\end{thm}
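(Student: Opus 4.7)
The plan is to apply the direct method of the calculus of variations. Since $J\geq 0$ and the admissible set $\mathcal{C}$ is assumed non-empty, the infimum $d:=\inf_{\mathcal{C}}J$ is finite; pick a minimizing sequence $\{(\phi_n,\mathbf{A}_n,u_n)\}\subset \mathcal{C}$ with $J(\phi_n,\mathbf{A}_n,u_n)\to d$. The three penalty terms of $J$ immediately force $\|\nabla\phi_n\|_{0,2,\Omega}$, $\|\,|\phi_n|^2\,\|_{0,2,\Omega}=\|\phi_n\|_{0,4,\Omega}^2$, and $\|u_n\|_{0,2,\partial\Omega}$ to be uniformly bounded (using $\phi_d\in L^4(\Omega;\mathbb{C})$ and the triangle inequality for the middle term). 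Hence $\{\phi_n\}$ is bounded in $W^{1,2}(\Omega;\mathbb{C})\cap L^4(\Omega;\mathbb{C})$ and $\{u_n\}$ is bounded in $L^2(\partial\Omega;\mathbb{C})$.

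The next step, and the \emph{main technical obstacle}, is the a priori bound on $\{\mathbf{A}_n\}$ in $W^{1,2}(\Omega_1;\mathbb{R}^3)$. Testing $(\ref{a.2})$ against $\mathbf{A}_n$ and integrating by parts, using the Coulomb gauge $\mathrm{div}\,\mathbf{A}_n=0$ in $\Omega_1$ and the boundary condition $\mathbf{A}_n\cdot\mathbf{n}=0$ on $\partial\Omega_1$, yields
$$\int_{\Omega_1}|\mathrm{curl}\,\mathbf{A}_n|^2\,dx = \int_{\Omega_1}\mathbf{B}_0\cdot\mathrm{curl}\,\mathbf{A}_n\,dx + \frac{4\pi}{c}\int_{\Omega}\tilde{J}_n\cdot \mathbf{A}_n\,dx.$$
The Friedrichs curl inequality, applicable under these gauge and boundary conditions on the convex set $\Omega_1$, controls $\|\mathbf{A}_n\|_{1,2,\Omega_1}$ by $\|\mathrm{curl}\,\mathbf{A}_n\|_{0,2,\Omega_1}$. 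The delicate point is bounding the coupling $\int_\Omega \tilde{J}_n\cdot\mathbf{A}_n$, which contains both $\phi_n^\ast\nabla\phi_n\cdot \mathbf{A}_n$ and $|\phi_n|^2|\mathbf{A}_n|^2$; I would use the Sobolev embedding $W^{1,2}(\Omega_1)\hookrightarrow L^6(\Omega_1)$, Hölder's inequality on the triples $(2,4,4)$ and $(2,2)$, and Young's inequality to absorb the $\|\mathrm{curl}\,\mathbf{A}_n\|_{0,2,\Omega_1}^2$ factor on the left. Closing this estimate requires keeping track of the constants so that the coercive term dominates the nonlinear coupling.

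Once all sequences are bounded in reflexive Banach spaces, I extract (up to a subsequence) weak limits $\phi_n\rightharpoonup \phi_0$ in $W^{1,2}(\Omega;\mathbb{C})$, $\mathbf{A}_n\rightharpoonup \mathbf{A}_0$ in $W^{1,2}(\Omega_1;\mathbb{R}^3)$, and $u_n\rightharpoonup u_0$ in $L^2(\partial\Omega;\mathbb{C})$. By the Rellich--Kondrashov theorem, $\phi_n\to\phi_0$ strongly in $L^q(\Omega)$ for every $q<6$, and $\mathbf{A}_n\to\mathbf{A}_0$ strongly in $L^q(\Omega_1)$ for every $q<6$. In the weak formulations of $(\ref{a.1})$ and $(\ref{a.2})$ the linear terms pass by weak convergence; the cubic nonlinearities $|\phi_n|^2\phi_n$, $\phi_n^\ast\nabla\phi_n$, and $|\phi_n|^2\mathbf{A}_n$ pass because strong $L^4$-convergence of $\phi_n$ combines with weak $L^2$-convergence of $\nabla\phi_n$ and of $\mathbf{A}_n$; and the boundary term $\int_{\partial\Omega}u_n\overline{\eta}\,dS$ passes by weak convergence of $u_n$ in $L^2(\partial\Omega;\mathbb{C})$ against the trace of any admissible test function. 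Hence $(\phi_0,\mathbf{A}_0,u_0)\in \mathcal{C}$.

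It remains to check that $J$ is weakly lower semi-continuous at $(\phi_0,\mathbf{A}_0,u_0)$. The Dirichlet term $\frac{\varepsilon}{2}\|\nabla\phi\|_{0,2,\Omega}^2$ and the control term $K_2\|u\|_{0,2,\partial\Omega}^2$ are squared norms on Hilbert spaces, hence convex and continuous, so weakly lower semi-continuous. The middle term is in fact \emph{continuous} along the minimizing sequence, since strong $L^4$-convergence of $\phi_n$ to $\phi_0$ implies strong $L^2$-convergence of $|\phi_n|^2$ to $|\phi_0|^2$ and hence of $|\phi_n|^2-|\phi_d|^2$. Consequently $J(\phi_0,\mathbf{A}_0,u_0)\leq \liminf_n J(\phi_n,\mathbf{A}_n,u_n)=d$, and the minimum is attained at $(\phi_0,\mathbf{A}_0,u_0)$.
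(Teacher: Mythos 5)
Your proposal is correct and follows essentially the same route as the paper: a minimizing sequence, a priori bounds from the functional, the key $W^{1,2}$ bound on $\mathbf{A}_n$ obtained by testing the curl--curl equation against $\mathbf{A}_n$ and invoking the Friedrichs inequality under the London gauge (the paper closes this estimate by a contradiction argument exploiting that the coercive terms are quadratic in $\|\mathbf{A}_n\|_{0,4,\Omega_1}$ while the coupling term is linear, which is equivalent to your absorption via Young's inequality, helped by the favorable sign of the $|\phi_n|^2|\mathbf{A}_n|^2$ term), then Rellich--Kondrashov compactness, passage to the limit in the weak formulations by combining strong and weak convergences, and weak lower semicontinuity of $J$. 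The only cosmetic difference is that the paper also derives a uniform bound on $\nabla^2\phi_n$ to pass to the limit in the strong form of the equation, which your purely weak-formulation argument avoids.
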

\begin{proof} Let $\{(\phi_n,\mathbf{A}_n,u_n)\}$ be a minimizing sequence (such a sequence exists from the existence result for $u=0$ in \cite{500},
and from the fact that $J$ is lower bounded by $0$).

Thus, such a sequence is such that
$$J(\phi_n, \mathbf{A}_n, u_n) \rightarrow \eta=\inf_{(\phi, \mathbf{A},u) \in \mathcal{C}} J(\phi,\mathbf{A},u).$$

From the expression of $J$, there exists $K>0$ such that
$$\|\nabla \phi_n\|_{0,2,\Omega} \leq K,$$
$$\|\phi_n\|_{0,4,\Omega} \leq K,$$
$$\|\phi_n\|_{0,2,\Omega} \leq K,$$
and
$$\|u_n\|_{0,2,\partial \Omega} \leq K,\; \forall n \in \mathbb{N}$$ so that, from the Rellich-Kondrashov Theorem, there exists
a not relabeled subsequence, $\phi_0 \in W^{1,2}(\Omega,\mathbb{C})$ and $u_0 \in L^2(\Omega,\mathbb{C})$ such that
$$\phi_n \rightharpoonup \phi_0, \text{ weakly in } W^{1,2}(\Omega,\mathbb{C}),$$
 $$\phi_n \rightarrow \phi_0, \text{ in norm  in }  L^2(\Omega,\mathbb{C}) \text{ and }L^4(\Omega,\mathbb{C}),$$
 $$u_n \rightharpoonup u_0, \text{ weakly in } L^2(\partial \Omega,\mathbb{C}), \text{ as } n \rightarrow \infty.$$

 On the other hand,  we have from (\ref{a.2}), from the generalized H\"{o}lder inequality and for  constants $\gamma=\frac{4\pi}{c}\left|\frac{-ie\hbar}{2m}\right|>0$ and $\gamma_1=\frac{4\pi}{c}\frac{2e^2}{mc}>0$ that
 \begin{eqnarray}\label{a.3}0=\rho_{1,n}&\equiv&\langle \text{ curl }\mathbf{A}_n,\text{ curl }\mathbf{A}_n \rangle_{L^2(\Omega_1;\mathbb{R}^3)}
 \nonumber \\ &&-\langle \text{ curl }\mathbf{A}_n,\mathbf{B}_0 \rangle_{L^2(\Omega_1;\mathbb{R}^3)}\nonumber \\ &&+
 \frac{4\pi}{c}\left\langle \frac{ie\hbar}{2m}(\phi^*\nabla \phi-\phi\nabla \phi^*)+\frac{2e^2}{mc}|\phi|^2 \mathbf{A}_n, \mathbf{A}_n\right\rangle_{L^2(\Omega,\mathbb{R}^3)}
 \nonumber \\ &\geq& \langle \text{ curl }\mathbf{A}_n,\text{ curl }\mathbf{A}_n \rangle_{L^2(\Omega_1,\mathbb{R}^3)}
 \nonumber \\ &&- \| \text{ curl }\mathbf{A}_n\|_{0,2,\Omega_1}\|\mathbf{B}_0\|_{0,2,\Omega_1}-\gamma \| \mathbf{A}_n\|_{0,4,\Omega_1}\|\phi_n\|_{0,4,\Omega}\|\nabla \phi_n\|_{0,2,\Omega} \nonumber \\ &&+\gamma_1\left\langle |\phi|^2 ,\mathbf{A}_n\cdot \mathbf{A}_n\right\rangle_{L^2(\Omega,\mathbb{R}^3)}.
 \end{eqnarray}

 From the Friedrichs Inequality (see \cite{780} for details) and the Sobolev Imbedding  Theorem for appropriate constants indicated, we obtain
 \begin{eqnarray}\|\mathbf{A}_n\|_{0,4,\Omega_1}^2 &\leq& K_3 \|\mathbf{A}_n\|_{1,2,\Omega_1}^2 \leq K_4 \left(\|\text{ div } \mathbf{A}_n\|_{0,2,\Omega_1}+
 \|\text{ curl }\mathbf{A}_n\|_{0,2,\Omega}\right)^2 \nonumber \\ &=& K_4\|\text{ curl }\mathbf{A}_n\|_{0,2,\Omega_1}^2,\end{eqnarray}
 since from the London Gauge assumption, $$\text{ div } \mathbf{A}_n=0, \text{ in }\Omega_1,\; \forall n \in \mathbb{N}.$$

Summarizing, we have obtained, for some appropriate $K_5>0$,
\begin{eqnarray}\label{a.4}
0=\rho_{1,n}&\geq& K_5\|\mathbf{A}_n\|_{0,4,\Omega_1}^2+\frac{1}{2}\|\text{ curl } \mathbf{A}_n\|_{0,2,\Omega_1}^2
 \nonumber \\ &&-\| \text{ curl }\mathbf{A}_n\|_{0,2,\Omega_1}\|\mathbf{B}_0\|_{0,2,\Omega_1}-\gamma\|\mathbf{A}_n\|_{0,4,\Omega_1}K^2 \nonumber \\ &&+\gamma_1\left\langle |\phi|^2 ,\mathbf{A}_n\cdot \mathbf{A}_n\right\rangle_{L^2(\Omega;\mathbb{R}^3)}
 \nonumber \\ &\equiv& \rho_{2,n}.
 \end{eqnarray}
Now, suppose to obtain contradiction there exists a subsequence $\{n_k\} \subset \mathbb{N}$ such that $$\|\mathbf{A}_{n_k}\|_{0,4,\Omega_1}
\rightarrow \infty, \text{ as } k \rightarrow \infty.$$ From (\ref{a.4}) we obtain $$\rho_{2,n_k} \rightarrow \infty, \text{ as } k \rightarrow \infty,$$
which contradicts $$\rho_{2,n} \leq 0, \forall n \in \mathbb{N}.$$

Hence, there exists $K_6>0$ such that
$$\|\mathbf{A}_n\|_{0,4,\Omega_1}< K_6,$$
and
$$\|\mathbf{A}_n\|_{0,2,\Omega_1}<K_6,\; \forall n \in \mathbb{N}.$$

From this and (\ref{a.3}) we have,
\begin{eqnarray}\label{a.5}0=\rho_{1,n} &\geq& \|\text{ curl }\mathbf{A}_n \|_{0,2,\Omega_1}^2
 \nonumber \\ &&-\| \text{ curl }\mathbf{A}_n\|_{0,2,\Omega_1}\|\mathbf{B}_0\|_{0,2,\Omega_1}-\gamma K_6 K^2 \nonumber \\ &&+\gamma_1\left\langle |\phi|^2 ,\mathbf{A}_n\cdot \mathbf{A}_n\right\rangle_{L^2(\Omega,\mathbb{R}^3)} \nonumber \\ &\equiv& \rho_{3,n}.
 \end{eqnarray}
 Suppose to obtain contradiction there exists a subsequence $\{n_k\} \subset \mathbb{N}$ such that $$\|\text{ curl } \mathbf{A}_{n_k}\|_{0,2,\Omega_1}
\rightarrow \infty, \text{ as } k \rightarrow \infty.$$ From (\ref{a.5}) we obtain $$\rho_{3,n_k} \rightarrow \infty, \text{ as } k \rightarrow \infty,$$
which contradicts $$\rho_{3,n} \leq 0, \forall n \in \mathbb{N}.$$

Hence, there exists $K_7>0$ such that
$$\|\text{ curl }\mathbf{A}_n\|_{0,2,\Omega_1}< K_7,\; \forall n \in \mathbb{N}$$
so that from this, the Friedrichs inequality and the London Gauge hypothesis, we obtain $K_8>0$ such that
$$\|\mathbf{A}_n\|_{1,2,\Omega_1} <  K_8.$$

So from such a result and the Rellich-Kondrashov Theorem there exists a not relabeled subsequence and $\mathbf{A}_0 \in W^{1,2}(\Omega_1,\mathbb{R}^3)$
such that
$$ \mathbf{A}_n \rightharpoonup \mathbf{A}_0 \text{ weakly } \in W^{1,2}(\Omega_1,\mathbb{R}^3)$$
$$ \mathbf{A}_n \rightarrow \mathbf{A}_0 \text{ in norm  in } L^2(\Omega_1,\mathbb{R}^3) \text{ and } L^4(\Omega_1,\mathbb{R}^3).$$

Moreover from the Sobolev Imbedding Theorem, there exist  real constants $\hat{K}>0, \;\hat{K}_1>0$ such that
$$\|\phi_n\|_{0,6,\Omega} \leq \hat{K}\|\phi_n\|_{1,2,\Omega} < \hat{K}_1, \; \forall n \in \mathbb{N}.$$

Thus, from this and the first equation in (\ref{a.1}), there exist real constants $\hat{K}_2>0,\ldots, \hat{K}_6>0,$ such that
\begin{eqnarray}
\|\nabla^2 \phi_n\|_{0,2,\Omega} &<& \hat{K}_2\|\mathbf{A}_n\|_{0,2,\Omega}\|\nabla \phi_n\|_{0,2,\Omega} \nonumber \\
&& \hat{K}_3\|\mathbf{A}_n\|_{0,4,\Omega}\|\phi_n\|_{0,2,\Omega}+\hat{K}_4\|\phi_n\|_{0,6,\Omega}^3+\hat{K}_5\|\phi_n\|_{0,2,\Omega}
\nonumber \\  &\leq& \hat{K}_6,\; \forall n \in \mathbb{N}.\end{eqnarray}

From this, up to a subsequence, we get
$$\nabla^2 \phi_n \rightharpoonup \nabla^2 \phi_0 \text{ weakly in } L^2(\Omega;\mathbb{C}).$$

Let $$\varphi \in C_c^\infty(\Omega,\mathbb{C}),\; \varphi_1 \in C_c^\infty(\Omega,\mathbb{R}^3) \text{ and } \varphi_2 \in C_c^\infty(\Omega_1\setminus \Omega,\mathbb{R}^3).$$

From the last results, we may easily obtain the following limits

\begin{enumerate}
\item $$\langle \nabla^2 \phi_n, \varphi\rangle_{L^2} \rightarrow \langle \nabla^2 \phi_0, \varphi\rangle_{L^2},$$
\item $$\langle \nabla \phi_n, \nabla \varphi \rangle_{L^2} \rightarrow \langle \nabla \phi_0, \nabla \varphi \rangle_{L^2},$$
\item $$\langle \mathbf{A}_n \cdot \nabla\phi_n,  \varphi \rangle_{L^2} \rightarrow \langle \mathbf{A}_0 \cdot \nabla \phi_0, \varphi \rangle_{L^2},$$
\item\label{a.9} $$\langle |\mathbf{A}_n|^2 \phi_n, \varphi \rangle_{L^2} \rightarrow \langle  |\mathbf{A}_0|^2 \phi_0,  \varphi \rangle_{L^2},$$
\item $$\langle |\phi_n|^2 \phi_n, \varphi \rangle_{L^2} \rightarrow \langle  |\phi_0|^2 \phi_0,  \varphi \rangle_{L^2},$$
\item $$\langle \text{ curl }\mathbf{A}_n, \text{ curl }\varphi_1 \rangle_{L^2} \rightarrow \langle \text{ curl }\mathbf{A}_0, \text{ curl }\varphi_1 \rangle_{L^2},$$
\item $$\langle \phi_n^* \nabla \phi_n, \varphi_1\rangle_{L^2} \rightarrow \langle \phi_0^* \nabla \phi_0, \varphi_1\rangle_{L^2},$$
\item $$\langle \phi_n \nabla \phi_n^*, \varphi_1\rangle_{L^2} \rightarrow \langle \phi_0 \nabla \phi_0^*, \varphi_1\rangle_{L^2},$$
\item $$\langle |\phi_n|^2 \mathbf{A}_n, \varphi_1\rangle_{L^2} \rightarrow \langle |\phi_0|^2 \mathbf{A}_0, \varphi_1\rangle_{L^2}.$$
\end{enumerate}

For example, for (\ref{a.9}), for an appropriate real $\tilde{K}>0$ we have
\begin{eqnarray}
&&|\langle |\mathbf{A}_n|^2 \phi_n,\varphi\rangle_{L^2}-\langle |\mathbf{A}_0|^2 \phi_0,\varphi\rangle_{L^2}|
\nonumber \\ &=&|\langle |\mathbf{A}_n|^2 \phi_n,\varphi\rangle_{L^2}-\langle |\mathbf{A}_0|^2 \phi_n,\varphi\rangle_{L^2}
+\langle |\mathbf{A}_0|^2 \phi_n,\varphi\rangle_{L^2}-\langle |\mathbf{A}_0|^2 \phi_0,\varphi\rangle_{L^2}|
\nonumber \\ &\leq&
|\langle |(\mathbf{A}_n|^2-|\mathbf{A}_0|^2) \phi_n,\varphi\rangle_{L^2}+\langle |\mathbf{A}_0|^2(\phi_n- \phi_0),\varphi\rangle_{L^2}|
\nonumber \\ &\leq&
|\langle |(\mathbf{A}_n|-|\mathbf{A}_0|)(|\mathbf{A}_n|+|\mathbf{A}_0|) \phi_n,\varphi\rangle_{L^2}+\langle |\mathbf{A}_0|^2(\phi_n- \phi_0),\varphi\rangle_{L^2}|
\nonumber \\ &\leq&
\|(\mathbf{A}_n|+|\mathbf{A}_0|)\|_{0,4,\Omega}\||\mathbf{A}_n|-|\mathbf{A}_0|\|_{0,4,\Omega}\|\phi_n|_{0,2,\Omega} \|\varphi\|_\infty+\||\mathbf{A}_0|^2\|_{0,2,\Omega}\|\phi_n- \phi_0\|_{0,2,\Omega}\|\varphi\|_\infty
\nonumber \\ &\leq& \tilde{K}(\||\mathbf{A}_n|-|\mathbf{A}_0|\|_{0,4,\Omega}+\|\phi_n- \phi_0\|_{0,2,\Omega}) \nonumber \\ &\rightarrow& 0, \text{ as } n \rightarrow \infty.
\end{eqnarray}

The other items may be proven similarly.

Now let $\varphi \in C^\infty(\overline{\Omega},\mathbb{C}).$
Observe that

\begin{eqnarray}
&&\langle u_n,\varphi\rangle_{L^2(\partial \Omega,\mathbb{C})}
\nonumber \\ &=& \left\langle \left(i\hbar \nabla \phi_n+\frac{2e}{c}\mathbf{A}_n\phi_n\right)\cdot \mathbf{n}, \varphi\right\rangle_{L^2(\partial \Omega,\mathbb{C})}
\nonumber \\ &=& \left\langle i\hbar \nabla \phi_n+\frac{2e}{c}\mathbf{A}_n\phi_n, \nabla \varphi\right\rangle_{L^2(\Omega,\mathbb{C}^3)}
\nonumber \\ && +\left\langle \text{div }\left( i\hbar \nabla \phi_n+\frac{2e}{c}\mathbf{A}_n\phi_n\right),  \varphi\right\rangle_{L^2(\Omega,\mathbb{C})}
\nonumber \\ &\rightarrow& \left \langle i\hbar \nabla \phi_0+\frac{2e}{c}\mathbf{A}_0\phi_0, \nabla \varphi\right\rangle_{L^2(\Omega,\mathbb{C}^3)}
\nonumber \\ && +\left\langle \text{div }\left( i\hbar \nabla \phi_0+\frac{2e}{c}\mathbf{A}_0\phi_0\right),  \varphi\right\rangle_{L^2(\Omega,\mathbb{C})}
\nonumber \\ &=& \left\langle \left(i\hbar \nabla \phi_0+\frac{2e}{c}\mathbf{A}_0\phi_0\right)\cdot \mathbf{n}, \varphi\right\rangle_{L^2(\partial \Omega,\mathbb{C})}.
\end{eqnarray}

From this and from $$\langle u_n,\varphi\rangle_{L^2(\partial \Omega,\mathbb{C})} \rightarrow \langle u_0,\varphi\rangle_{L^2(\partial \Omega,\mathbb{C})},$$
we have
$$\left\langle \left(i\hbar \nabla \phi_0+\frac{2e}{c}\mathbf{A}_0\phi_0\right)\cdot \mathbf{n}-u_0, \varphi\right\rangle_{L^2(\partial \Omega,\mathbb{C})}=0,\; \forall \varphi \in C^\infty(\overline{\Omega},\mathbb{C}),$$
so that in such a distributional sense,
$$\left(i\hbar \nabla \phi_0+\frac{2e}{c}\mathbf{A}_0\phi_0\right)\cdot \mathbf{n}=u_0, \text{ on } \partial \Omega.$$

The other boundary condition may be dealt similarly. Thus, from these last results we may infer that in the distributional sense,
\begin{equation}\left\{
\begin{array}{ll}
 \frac{1}{2m}\left(-i\hbar \nabla -\frac{2e}{c}\mathbf{A}_0 \right)^2\phi_0+\alpha |\phi_0|^2\phi_0 -\beta \phi_0 =0,& \text{ in } \Omega,
  \\ \\
  \left(i\hbar \nabla \phi_0+ \frac{2e}{c}\mathbf{A}_0\phi_0\right) \cdot \mathbf{n}=u_0,& \text{ on } \partial  \Omega,\end{array} \right.\end{equation}

and

\begin{equation}\left\{
\begin{array}{ll}
 \text{ curl } \text{ curl } \mathbf{A}_0 =\text{ curl }\mathbf{B}_0+\frac{4\pi}{c} \tilde{J}_0,& \text{ in } \Omega,
  \\ \\
  \text{ curl } \text{ curl } \mathbf{A}_0=\text{ curl } \mathbf{B}_0,& \text{ in } \Omega_1\setminus \Omega,
  \\ \\ \text{ div }\mathbf{A}_0=0,& \text{ in } \Omega_1,
  \\ \\
  \mathbf{A}_0 \cdot \mathbf{n}=0,& \text{ on } \partial \Omega_1
  \end{array} \right.\end{equation}
  where,
 $$\tilde{J}_0=-\frac{ie\hbar}{2m}(\phi^*_0\nabla \phi_0-\phi_0\nabla \phi^*_0)-\frac{2e^2}{mc}|\phi_0|^2 \mathbf{A}_0.$$

 Hence $(\phi_0,\mathbf{A}_0,u_0) \in \mathcal{C}.$

 Finally, from $\phi_n \rightarrow \phi_0$ in $L^2$ and $L^4$,  $\phi_n \rightharpoonup \phi_0 \text{ weakly  in } W^{1,2},$
 $u_n \rightharpoonup u_0 \text{ weakly in } L^2(\partial \Omega),$ by continuity in $\phi$ and the convexity of $J$ in $\nabla \phi$
 and $u$, we have,

 $$\eta=\liminf_{n \rightarrow \infty} J(\phi_n,\mathbf{A}_n, u_n) \geq J(\phi_0,\mathbf{A}_0,u_0).$$

The proof is complete.
\end{proof}
\section{Conclusion}

In this article we have developed a global existence result for a control problem related to the Ginzburg-Landau system in superconductivity.
We emphasize the control variable $u$ acts on the super-conducting sample boundary, whereas the state variables, namely, the order parameter
$\phi$ and the magnetic potential $\mathbf{A}$ are defined on $\Omega$ and $\Omega_1$, respectively. The problem has non-linear constraints but the cost functional is convex. Finally, we highlight the London Gauge assumption and the Friedrichs Inequality have a fundamental role in the establishment of the main results.

\end{document}